\definecolor{darkblue}{rgb}{0.0,0,0.7} 
\newcommand{\darkblue}{\color{darkblue}} 
\definecolor{darkred}{rgb}{0.7,0,0} 
\newtheorem{theorem}{Theorem}[section] 
\newtheorem{proposition}[theorem]{Proposition} 
\newtheorem{corollary}[theorem]{Corollary} 
\newtheorem{lemma}[theorem]{Lemma}
\theoremstyle{definition}
\newtheorem{definition}[theorem]{Definition}
\newtheorem{remark}[theorem]{Remark} 
\newtheorem{example}[theorem]{Example}
\newcommand{\Tc}{\mathcal{T}}
\newcommand{\Ima}{\operatorname{Im}}
\newcommand{\kk}{\mathbf{k}}
\newcommand{\F}{\mathcal{F}}
\newcommand{\Tam}{\operatorname{Tam}}
\newcommand{\modf}{\operatorname{mod}}
\newcommand{\Hom}{\operatorname{Hom}}
\newcommand{\add}{\operatorname{add}}
\newcommand{\Ext}{\operatorname{Ext}}
\newcommand{\Filt}{\operatorname{Filt}}
\newcommand{\T}{\mathcal{T}}
\newcommand{\ideal}{\operatorname{Ideal}}
\newcommand{\Int}{\operatorname{Int}}
\newcommand{\Fc}{\mathcal{F}}
\newcommand{\Fb}{\operatorname{F}}
\newcommand{\Tb}{\operatorname{T}}
\crefname{figure}{figure}{figures}
\Crefname{figure}{Figure}{Figures}
\newcommand{\emp}[1]{\emph{\darkblue #1}} 
\newcommand{\darkred}{\color{darkred}} 
\newcommand{\defn}[1]{\emph{\darkred #1}}
\title{A remark on s-torsion pairs and on the lattice of Dyck paths}
\author{Baptiste Rognerud}
\date{}
\newcolumntype{P}[1]{>{\centering\arraybackslash}p{#1}}
\begin{document}

\maketitle


\begin{abstract}
There are three classical lattices on the Catalan numbers: the Tamari lattice, the lattice of noncrossing partitions and the lattice of Dyck paths. The first is known to be isomorphic to the lattice of torsion classes of the path algebra of an equioriented quiver of type $A$ and the second is known to be isomorphic to its lattice of wide subcategories. Inspired by the notion of $s$-torsion classes of Adachi, Enomoto and Tsukamoto, in this short note we interpret the lattice of Dyck paths as a lattice of subcategories. 
 \end{abstract}
\section{Introduction}
There are three classical lattices on the Catalan numbers: the Tamari lattice, the lattice of noncrossing partitions and the lattice of Dyck paths. These three lattices have many Coxeter/Dynkin interpretations. For example, the Tamari lattice appears as a \emp{cambrian} lattice of type $A$ in the sense of \cite{reading2006cambrian} and the classical noncrossing partitions are known to be the type $A$ (equioriented) of the noncrossing partitions associated to a Coxeter group. The lattice of Dyck paths is isomorphic to the lattice of \emp{nonnesting partitions} which are the order ideals in the poset of positive roots of a \emp{root system} of type $A$, see for example \cite[Section 4.2]{gobet2016noncrossing}. 

The first two lattices can also be viewed as lattices of particular \emp{subcategories satisfying a closure property}, ordered by inclusion. This kind of realizations, among other things, makes it possible to obtain clean proofs of the lattice property. Moreover, it allows us to use representation theory to study the lattice. The Tamari lattice is known to be isomorphic to the lattice of \emp{torsion classes} of  the path algebra of an equioriented quiver of type $A$, see for example \cite{thomas_tamari}. By \cite[Theorem 1]{ingalls2009noncrossing}, the lattice of (classical) noncrossing partitions is isomorphic to the lattice of \emp{wide subcategories} of the path algebra of the same quiver. However, as far as the author knows, no such interpretation is known for the lattice of Dyck paths.

The lattice of Dyck path is in some sense much nicer than the two others: the ordering relation is particularly simple and it is a \emp{distributive lattice}. However, its distributivity is an \emp{obstruction} to its realization as the lattice of torsion classes of a finite dimensional algebra. Indeed, if an algebra has two simple modules $S$ and $S'$ with a non split extension, they generate a penthagonal sublattice of the lattice of torsion classes, hence the only algebras which have a distributive lattice of torsion classes are the semisimple algebras (and the products of local algebras see \cite{luo2023boolean} for more details). In that case, the lattice of torsion classes is a boolean lattice. 

Inspired by the notion of s-torsion pairs of \cite{aet}, we introduce the notion of \defn{$\omega_n$-torsion pairs} for the category of finitely generated modules over an artinian algebra (see \cref{def:main}).  When the algebra has finite global dimension $n$, we recover \cite[Example 3.3]{aet}, however $\omega_n$-torsion pairs can be defined without any extra assumption on the artinian algebra, in particular we will consider the $\omega$-torsion pairs (= $\omega_1$-torsion pairs) of the incidence algebra of finite posets. We prove that the set of $\omega_n$-torsion pairs is a sublattice of the lattice of torsion pairs, therefore it is a \emp{semidistributive lattice}. In the case $n=1$, it is a always a \emp{finite distributive} lattice. Moreover we prove
\begin{proposition}
Let $(P,\leq)$ be a finite poset and $A$ its incidence algebra over a field $\kk$. The poset of $\omega$-torsion pairs of $A$ is isomorphic to the distributive lattice of order ideals of $(P,\leq)^{op}$. 
\end{proposition}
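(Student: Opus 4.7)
The plan is to exhibit an explicit inclusion-preserving bijection between $\omega$-torsion pairs of the incidence algebra $A$ and the upsets of $(P,\leq)$, which are exactly the order ideals of $(P,\leq)^{op}$. Write $S_x$ and $P(x)$ for the simple and indecomposable projective $A$-module indexed by $x \in P$; the composition factors of $P(x)$ are the simples $S_y$ with $y \geq x$, each with multiplicity one, and the radical of $P(x)$ is filtered by the $S_y$ with $y$ strictly above $x$, so the first syzygy of $S_x$ lies in $\Filt(S_y : y > x)$. To an upset $U \subseteq P$ attach the Serre subcategory $\T_U := \Filt(S_x : x \in U)$ together with its right perpendicular $\F_U := \T_U^{\perp}$. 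The first task is to verify, using \cref{def:main}, that $(\T_U, \F_U)$ is an $\omega$-torsion pair; by additivity and the closure properties of $\Filt$, this reduces to the simple generators $S_x$ for $x \in U$, where it follows from the syzygy description above since $U$ is an upset.

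Conversely, to an arbitrary $\omega$-torsion pair $(\T,\F)$ associate $U(\T) := \{x \in P : S_x \in \T\}$. Using the same syzygy description together with the $\omega$-condition (which forces a non-vanishing syzygy of an object of $\T$ to remain in $\T$), one shows that $x \in U(\T)$ and $x \leq y$ force $y \in U(\T)$, so $U(\T)$ is an upset of $P$. It remains to prove that $\T = \T_{U(\T)}$; an induction on composition length reduces this to the fact that every indecomposable of $\T$ is built from simples inside $U(\T)$, which again invokes the $\omega$-condition. Both maps $U \mapsto (\T_U, \F_U)$ and $(\T, \F) \mapsto U(\T)$ are manifestly inclusion-preserving, so this yields the claimed lattice isomorphism with the distributive lattice of order ideals of $(P,\leq)^{op}$.

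The main obstacle is to show that an $\omega$-torsion class of $A$ is exactly a Serre subcategory cut out by an upset of $P$. The implication \emph{Serre on an upset $\Rightarrow$ $\omega$-torsion} is the syzygy computation sketched above; the delicate converse must rule out the many other torsion classes that exist in general (for $P$ a chain of length $n$ the full torsion lattice is the Tamari lattice with $C_{n+1}$ elements, whereas the proposition allows only the $n+1$ upsets). This is where the strengthening from torsion to $\omega$-torsion must genuinely be used: the $\omega$-condition forces the torsion class to be stable under the syzygy functor, hence to be determined by its simples, which is precisely the content of the inverse map. The remainder of the argument is routine bookkeeping, translating the order inclusion of upsets into the inclusion order on torsion pairs and identifying upsets of $P$ with order ideals of $P^{op}$.
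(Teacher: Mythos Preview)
Your proposal is correct and follows essentially the same route as the paper: both set up a bijection between $\omega$-torsion pairs and the subsets of simples that are closed under successors (which for the incidence algebra are exactly the upsets of $P$), sending a pair to its set of simple objects and an upset $U$ to the Serre subcategory $\Filt(S_x:x\in U)$. The paper isolates the key step---that an $\omega$-torsion class is automatically a Serre subcategory and hence determined by its simples---as \cref{lem:omega} (via the $\Ext^1$-vanishing) and then applies it in \cref{prop:successors}, whereas you reach the same conclusion through the syzygy-stability characterisation of \cref{pro:main}; these two characterisations are equivalent, so the arguments are really the same.
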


By the famous Birkhoff representation theorem any finite distributive lattice can be realized as the lattice of order ideals of a given finite poset (the poset of join-irreducible elements). So any finite distributive lattice can be realized as a \emp{natural sublattice of a lattice of torsion pairs} of an incidence algebra.  For the lattice of Dyck paths, we obtain the following result, where $(\Int([n-1]),\subseteq)$ is the set of intervals of a total order with $n-1$ elements, ordered by containment. 

\begin{theorem}\label{thm:main}
The lattice of Dyck paths of length $n$ is isomorphic to the lattice of $\omega$-torsion pairs of the incidence algebra of $(\Int([n-1]),\subseteq)^{op}$. 
\end{theorem}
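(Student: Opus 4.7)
The plan is to combine the preceding proposition with the classical identification of Dyck paths with the order ideals of the type $A_{n-1}$ positive root poset. Applying the proposition with $P = (\Int([n-1]),\subseteq)^{op}$, and using that $P^{op} = (\Int([n-1]),\subseteq)$, the lattice of $\omega$-torsion pairs of the incidence algebra of $P$ is isomorphic to the distributive lattice of order ideals of $(\Int([n-1]),\subseteq)$, ordered by inclusion. It therefore remains to exhibit a lattice isomorphism between this lattice and the lattice of Dyck paths of length $n$.

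To construct it, I would parameterize both sides by weakly increasing sequences. For an order ideal $I$ of $(\Int([n-1]),\subseteq)$ and each $r \in [n-1]$, set
\[
f_I(r) \;=\; \max\bigl(\{r-1\} \,\cup\, \{s : [r,s] \in I\}\bigr),
\]
so that $f_I(r) = r-1$ precisely when no interval with left endpoint $r$ lies in $I$. Because $I$ is closed under taking subintervals, $f_I$ is weakly increasing with $r-1 \le f_I(r) \le n-1$, and any such function $f$ conversely determines a unique order ideal $I_f = \{[r,s] : r \le s \le f(r)\}$. On the other side, a Dyck path of length $n$, realized as a lattice path from $(0,0)$ to $(n,n)$ lying weakly above the diagonal, is determined by its sequence of heights $t_1 \le \cdots \le t_n$ satisfying $i \le t_i \le n$ and $t_n = n$. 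The assignment $t_i = f_I(i) + 1$ for $i < n$, together with $t_n = n$, then sets up a bijection between order ideals and Dyck paths.

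The final step is to check that this bijection is a lattice isomorphism. This reduces to the two elementary observations that $I \subseteq I'$ holds if and only if $f_I(r) \le f_{I'}(r)$ for every $r$, and that one Dyck path lies weakly below another precisely when the corresponding height sequences are pointwise ordered; both directions are immediate from the definitions. The main care needed is in the boundary conventions (the shift $t = f + 1$ and the fixed value $t_n = n$), but no deeper obstacle is anticipated: once the parameterizations are set up, the verification is routine.
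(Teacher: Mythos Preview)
Your proposal is correct and follows the same route as the paper: apply the proposition on $\omega$-torsion pairs of incidence algebras to reduce to order ideals of $(\Int([n-1]),\subseteq)$, then identify these with Dyck paths of length $n$. The only difference is that you spell out an explicit height-sequence bijection for the second step, whereas the paper simply invokes this as the standard identification of Dyck paths with nonnesting partitions (order ideals in the type $A_{n-1}$ root poset).
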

To conclude this note, we would like to point out that there is another way to give a \emp{torsion flavor} to the Dyck paths. The lattice of torsion pairs of the path algebra of an equioriented quiver of type $A_{n-1}$ is isomorphic to the \emp{Tamari lattice} $\Tam_n$ on the binary trees with $n$ inner vertices. We can therefore use the following result, which can be found in \cite{geyer}, even if it is not written explicitly in these terms.

\begin{theorem}[Geyer]\label{thm:2}
The lattice of Dyck paths is isomorphic to the lattice of congruences of the Tamari lattice.
\end{theorem}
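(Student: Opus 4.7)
The plan is to deduce the theorem from the general theory of congruences of finite semidistributive lattices, applied to $\Tam_n$. Since $\Tam_n$ is isomorphic to the lattice of torsion classes of the path algebra $\kk Q$ of an equioriented quiver $Q$ of type $A_{n-1}$, it is semidistributive, so $\Con(\Tam_n)$ is a finite distributive lattice. A classical theorem (Day, Freese--Je\v{z}ek--Nation, or Reading in the combinatorial/Cambrian setting) identifies $\Con(L)$, for $L$ finite semidistributive, with the lattice of order ideals of the poset of join-irreducibles of $L$ under the \emph{forcing} relation: $j$ forces $j'$ exactly when the smallest congruence collapsing $j$ with its unique lower cover also collapses $j'$ with its unique lower cover.

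I would next identify this forcing poset explicitly. The join-irreducibles of $\tor(\kk Q)$ are in bijection with the isomorphism classes of bricks of $\kk Q$; for type $A_{n-1}$ every indecomposable module is a brick, and the bricks are naturally indexed by the intervals of $[n-1]$. Either by a direct brick-labelling analysis of the cover relations of $\tor(\kk Q)$ (computing, for each brick $M$, the smallest torsion class containing $M$ and deducing its unique lower cover), or equivalently by reformulating Geyer's combinatorial description of the covering congruences of $\Tam_n$, one checks that the forcing poset on bricks is isomorphic to $(\Int([n-1]),\subseteq)^{op}$: a larger interval is forced by each of its sub-intervals.

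Combining these two steps with Birkhoff's representation theorem yields $\Con(\Tam_n) \cong J((\Int([n-1]),\subseteq)^{op})$, which by the proposition on $\omega$-torsion pairs of incidence algebras (together with \Cref{thm:main}) is the lattice of Dyck paths of length $n$. The main obstacle lies in the second step: Geyer phrases his analysis in purely combinatorial terms on binary trees, not in terms of forcing of join-irreducibles, so the heart of the translation consists in matching his description of covering congruences to the brick-labelling of $\tor(\kk Q)$, while keeping the various orientation and duality conventions (the orientation of $Q$, the chosen direction of the Tamari order, and the Birkhoff duality between a poset and its distributive lattice of order ideals) consistent throughout.
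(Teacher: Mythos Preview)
Your proposal follows essentially the same route as the paper's first proof: realise $\Tam_n$ as $\tor(\kk A_{n-1})$, identify $\Con(\Tam_n)$ with order ideals of the forcing poset on join-irreducibles, identify join-irreducibles with bricks (hence intervals of $[n-1]$), and check that forcing is reverse containment.

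Two small points. First, semidistributivity alone does not guarantee that $\Con(L)$ is the lattice of order ideals of the forcing \emph{poset} on join-irreducibles: in general the forcing relation is only a preorder, and one needs $L$ to be \emph{congruence uniform} (equivalently, bounded in the sense of Day) so that distinct join-irreducibles generate distinct join-irreducible congruences. The paper invokes \cite[Corollary~3.12]{DIRRT} for exactly this. Second, for the identification of the forcing order you gesture at either a direct brick-labelling computation or a translation of Geyer's combinatorics; the paper short-circuits this by citing \cite[Theorem~4.23, Corollary~4.27]{DIRRT}, which say that for lattices of torsion classes the forcing order on bricks is given by the subfactor relation, so $[i,j]\leq_f[k,l]$ iff $[k,l]\subseteq[i,j]$ with no further work. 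With these two references in place your outline becomes a complete proof, identical in substance to the paper's.
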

We believe that this result should be more widely known, and we present two simple proofs, the first as a corollary of \cite{DIRRT} and the second as a corollary of \cite{luo2024lattice}. 

\section{$\omega$-torsion pairs}
Let $A$ be a finite dimensional $\kk$-algebra or more generally an artinian $\kk$-algebra. We denote by $\modf A$ the category of finitely generated right $A$-modules and by a subcategory, we always mean a \emp{full subcategory} which is \emp{closed under isomorphisms}. Let us briefly recall that a torsion pair of $\modf A$ is a pair $(\Tc,\Fc)$ of subcategories such that 
\begin{enumerate}
\item For every $T\in \Tc$ and $F\in \Fc$, we have $\Hom_A(T,F) = 0$.
\item For every $X\in \modf A$, there exists a short exact sequence $0\to t(X) \to X \to f(X)\to 0$ with $t(X) \in \Tc$ and $f(X)\in \Fc$. 
\end{enumerate}
If $(\Tc,\Fc)$ is a torsion pair, we say that $\Tc$ is a \defn{torsion class} and $\Fc$ a \defn{torsion-free} class. If $\Tc$ is a torsion class, it is closed under \emp{extensions} and \emp{quotients}. Dually a torsion-free class is closed under \emp{extensions} and \emp{subobjects}. A subcategory which is closed under extensions, quotients and subobjects is called a \defn{Serre subcategory}. Moreover if $\T$ is a subcategory closed under extensions and quotients, then $(\T,\T^{\bot})$ is the unique torsion pair with $\T$ as torsion class. Here $\T^{\bot} = \{ X\in \modf A\ |\ \Hom_{A}(\T,X) = 0 \}$ denotes the right hom-orthogonal to $\T$. 

Set theoretical issues aside, the torsion pairs form a (complete) lattice for the partial ordering $(\Tc,\Fc) \preceq (\Tc',\Fc')$ if $\Tc \subseteq \Tc'$ and $\Fc'\subseteq \Fc$. Moreover, the meets and the joins of this lattice are easily computed:
\[
(\Tc,\Fc) \land (\Tc',\Fc') = (\Tc \cap \Tc',\operatorname{F}(\Fc\cup \Fc')), 
\]
\[
(\Tc,\Fc) \lor (\Tc',\Fc') = (\operatorname{T}(\Tc \cup \Tc'),\Fc\cap \Fc'), 
\]
where $\Tb(S)$ is the smallest torsion class containing $S$ and $\Fb(S)$ is the smallest torsion-free class containing $S$, for $S\subseteq \modf A$.

\begin{definition}\label{def:main}
A pair a subcategories $(\T,\F)$ of $\modf A$ is an \defn{$\omega_n$-torsion pair} if 
\begin{enumerate}
\item $(\T,\F)$ is a torsion pair of $\modf A$. 
\item $\Ext^{n}_A(\T,\F) = 0$.
\end{enumerate}
\end{definition}
\begin{remark}
Let $A$ be an algebra of global dimension $n$, for every module $M$, the functors $\Ext^{n}(M,-)$ and $\Ext^{n}(-,M)$ are right exact Adachi, Enomoto, Tsukamoto had the interesting idea in \cite{aet} to use these functors in order to endowed the module category with what they called an \emp{$\Ext^{-1}$-structure} by setting $\Ext^{-1}(M,N) = \Ext^{n}(M,N)$. They also introduce the notion of $s$-torsion pairs for \emp{extriangulated} categories with an $\Ext^{-1}$-structure as the torsion pairs satisfying $\Ext^{-1}(\T,\F) = 0$. Their setting is much more general than ours, but the two notions coincide in the special case of an algebra of finite global dimension $n$. 
\end{remark}

Let $X\in \modf A$, and $\pi : P \to X\to 0$ a \emp{projective cover} of $X$. The kernel of $\pi$ is denoted by $\Omega_X$ and is called the first \defn{syzygy} of $X$. Dually the cokernel of an \emp{injective envelope}, is denoted by $\Omega^{-1}_{X}$ and called the first \defn{cosyzygy} of $X$. More generally if $\cdots \to P_2 \to P_1 \to P_0 \to X\to 0$ is a \emp{minimal projective resolution} of $X$, the image of $P_{n} \to P_{n-1}$ is called the $n$th syzygy of $X$ and is denoted by $\Omega_X^{n}$. Let us recall the following well-known result (see e.g. \cite[Corollary 2.5.4]{benson1998representations} or \cite[Proposition 3.2.3]{Carlson2003}). 

\begin{lemma}\label{lem:min}
Let $P_\bullet = (P_i,d_i)_{i}$ be a projective resolution of $X$. Then $P_\bullet$ is minimal if and only if the complex $\Hom_A(P_\bullet,S)$ has its differentials equal to zero for every simple module $S$. 
\end{lemma}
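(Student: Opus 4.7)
The plan is to translate both sides of the claimed equivalence into a single condition on the differentials, namely that $\im(d_i) \subseteq \operatorname{rad}(P_{i-1})$ for every $i \geq 1$, where $\operatorname{rad}(P_{i-1})$ denotes the Jacobson radical of the module. The link between the two sides is the standard description
\[
\operatorname{rad}(P_{i-1}) = \bigcap \bigl\{\Ker(g) : g \in \Hom_A(P_{i-1}, S),\ S \text{ simple}\bigr\},
\]
valid for any finitely generated module over an artinian algebra, because the nonzero morphisms $P_{i-1} \to S$ with $S$ simple are exactly the quotients by the maximal submodules of $P_{i-1}$, whose intersection is the radical.

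For the first reformulation, I would unpack the definition of minimality: each factorization $P_i \to \Omega_X^i$ (and $P_0 \to X$) is required to be a projective cover. Over an artinian algebra, a surjection $\pi: P \to N$ from a projective module is a projective cover exactly when $\Ker(\pi) \subseteq \operatorname{rad}(P)$, by the standard consequence of Nakayama's lemma. Applying this to the short exact sequences $0 \to \im(d_{i+1}) \to P_i \to \im(d_i) \to 0$ (and $0 \to \im(d_1) \to P_0 \to X \to 0$) and using $\im(d_{i+1}) = \Ker(d_i)$, one sees that minimality is equivalent to $\im(d_i) \subseteq \operatorname{rad}(P_{i-1})$ for every $i \geq 1$.

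For the second reformulation, the differential of the complex $\Hom_A(P_\bullet, S)$ at position $i$ is the map $d_i^\ast : \Hom_A(P_{i-1}, S) \to \Hom_A(P_i, S)$ given by precomposition $g \mapsto g \circ d_i$. Saying that $d_i^\ast = 0$ for every simple $S$ is the same as saying that $\im(d_i) \subseteq \Ker(g)$ for every simple $S$ and every $g : P_{i-1} \to S$; by the displayed identity this is exactly $\im(d_i) \subseteq \operatorname{rad}(P_{i-1})$. Combining the two reformulations gives the lemma. I do not foresee any real obstacle; the only slightly delicate ingredient is the characterization of projective covers via the radical, which is classical in the artinian setting.
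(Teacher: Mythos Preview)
Your argument is correct and is the standard proof of this classical fact. The paper does not actually prove \cref{lem:min}; it merely cites \cite[Corollary 2.5.4]{benson1998representations} and \cite[Proposition 3.2.3]{Carlson2003}, and the proof you outline is essentially the one found in those references.
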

\begin{proposition}\label{pro:main}
Let $A$ be an artinian algebra and $(\T,\F)$ a torsion pair in $\modf A$. The following are equivalent:
\begin{enumerate}
\item $\Ext^{kn}_A(\T,\F) = 0$ for every $k\geq 1$. 
\item $\Ext^{n}_{A}(\T,\F) = 0$.
\item The category $\T$ is closed under $n$-syzygies:  if $X\in \T$, then $\Omega^{n}_X \in \T$.
\item The category $\F$ is closed under $n$-cosyzygies: if $X\in \F$, then $\Omega^{-n}_X \in \F$.
\end{enumerate}
\end{proposition}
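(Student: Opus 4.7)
The plan is to prove $(1) \Rightarrow (2) \Rightarrow (3) \Rightarrow (1)$ and then establish $(2) \Leftrightarrow (4)$ by a formal dual argument; the implication $(1) \Rightarrow (2)$ is trivial. For $(3) \Rightarrow (2)$, I would fix $T \in \T$ and iterate the short exact sequences $0 \to \Omega^{k+1}_T \to P_k \to \Omega^k_T \to 0$ coming from a minimal projective resolution of $T$ to obtain the standard dimension shift $\Ext^n_A(T, F) \cong \Ext^1_A(\Omega^{n-1}_T, F)$. The long exact sequence of $\Hom_A(-, F)$ attached to $0 \to \Omega^n_T \to P_{n-1} \to \Omega^{n-1}_T \to 0$ presents this as the cokernel of $\Hom_A(P_{n-1}, F) \to \Hom_A(\Omega^n_T, F)$; if $\Omega^n_T \in \T$ and $F \in \F$, this $\Hom$-group vanishes by definition of the torsion pair, and so does $\Ext^n_A(T, F)$.

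The heart of the proof is $(2) \Rightarrow (3)$. Fix $T \in \T$ and decompose the $n$th syzygy via the torsion pair as $0 \to t(\Omega^n_T) \to \Omega^n_T \to f(\Omega^n_T) \to 0$; the goal is $f(\Omega^n_T) = 0$. Since $f(\Omega^n_T) \in \F$, hypothesis (2) and the dimension shift above yield $\Ext^1_A(\Omega^{n-1}_T, f(\Omega^n_T)) = 0$, so the canonical surjection $q : \Omega^n_T \twoheadrightarrow f(\Omega^n_T)$ extends along the inclusion $\Omega^n_T \hookrightarrow P_{n-1}$ to a surjection $\tilde{q} : P_{n-1} \twoheadrightarrow f(\Omega^n_T)$. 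Minimality enters here: because $P_{n-1} \to \Omega^{n-1}_T$ is a projective cover, $\Omega^n_T \subseteq \operatorname{rad}(P_{n-1})$, and using $\operatorname{rad}(M) = M \cdot \operatorname{rad}(A)$ over the artinian algebra $A$, surjectivity of $\tilde{q}$ gives $\tilde{q}(\operatorname{rad}(P_{n-1})) = \operatorname{rad}(f(\Omega^n_T))$. Hence $f(\Omega^n_T) = \tilde{q}(\Omega^n_T) \subseteq \operatorname{rad}(f(\Omega^n_T))$, and Nakayama's lemma forces $f(\Omega^n_T) = 0$.

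For $(3) \Rightarrow (1)$, I would first note that the truncated resolution $\cdots \to P_{n+1} \to P_n \to \Omega^n_T \to 0$ is itself minimal (each kernel still sits in the radical of the next projective), so the $k$th syzygy of $\Omega^n_T$ in its own minimal resolution is $\Omega^{n+k}_T$. Iterating (3) therefore yields $\Omega^{kn}_T \in \T$ for every $k \geq 1$, and dimension shifting produces $\Ext^{kn}_A(T, F) \cong \Ext^n_A(\Omega^{(k-1)n}_T, F) = 0$ by the already-proved $(3) \Rightarrow (2)$. Finally, the equivalence $(2) \Leftrightarrow (4)$ is the formal dual: for $F \in \F$, one works with the minimal injective coresolution, decomposes $\Omega^{-n}_F$ as $0 \to t(\Omega^{-n}_F) \to \Omega^{-n}_F \to f(\Omega^{-n}_F) \to 0$, and uses the dual dimension shift $\Ext^1_A(t(\Omega^{-n}_F), \Omega^{-(n-1)}_F) \cong \Ext^n_A(t(\Omega^{-n}_F), F) = 0$ to lift the inclusion $t(\Omega^{-n}_F) \hookrightarrow \Omega^{-n}_F$ to an embedding into $I^{n-1}$. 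The essentiality $\operatorname{soc}(I^{n-1}) \subseteq \Omega^{-(n-1)}_F$ then forces $\operatorname{soc}(t(\Omega^{-n}_F)) = 0$, and since nonzero artinian modules have nonzero socle one concludes $t(\Omega^{-n}_F) = 0$.

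The main obstacle is the step $(2) \Rightarrow (3)$: the dimension shift alone only guarantees that morphisms $\Omega^n_T \to F$ extend to $P_{n-1}$, which on its own does not force $\Omega^n_T$ into $\T$. Invoking the minimality of the projective resolution (equivalently, \cref{lem:min}) together with Nakayama's lemma is precisely what squeezes the torsion-free quotient to zero, and it is where the artinian hypothesis is essential.
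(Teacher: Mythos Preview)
Your argument is correct and follows the same route as the paper: the heart is $(2)\Rightarrow(3)$, where one takes the torsion-free quotient of $\Omega^n_T$, uses the $\Ext$-vanishing to extend the quotient map to $P_{n-1}$, and then invokes minimality of the resolution to kill that quotient. The only difference is cosmetic: the paper phrases the extension as a nullhomotopy in $K(A)$ and cashes out minimality via \cref{lem:min} (passing to a simple quotient), whereas you dimension-shift to an $\Ext^1$ and use $\Omega^n_T\subseteq\operatorname{rad}(P_{n-1})$ together with Nakayama directly; these are equivalent formulations of the same fact, and your version has the mild advantage of not needing \cref{lem:min} as a separate lemma.
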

\begin{proof}
Assume first that $\Ext^{n}_A(\T,\F) = 0$ and let $X \in \T$. We consider $(P_i,d_n)_{i\in \mathbb{N}}$ a minimal projective resolution of $X$. Then $\Omega^n_X$ is the image of $d_{n} : P_{n} \to P_{n-1}$ and we denote by $\pi_n : P_n \to \Omega^{n}_X$ the surjection of $P_n$ onto $\Ima(d_n)$. Since $(\T,\F)$ is a torsion pair for $\modf A$, we have an exact sequence $0\to T \to \Omega^{n}_X \overset{f}{\to} F \to 0$ with $T\in \T$ and $F\in \F$ from which we deduce a morphism of complexes:
\[
\xymatrix
{
P_{n+1}\ar[r]^{d_{n+1}}\ar[d] & \ar[r]\ar[d] P_n \ar[r]^{d_n} \ar@{->>}[d]^{\pi_n} & P_{n-1}\ar@{-->>}[ddl]^{s}\ar[d] \ar[r] & \cdots \ar[r] & P_0\\
0\ar[r]\ar[d] & \Omega^n_X\ar@{->>}[d]^{f}\ar[r] &0\ar[d] \\
0\ar[r] & F \ar[r] & 0
}
\]
Since $\Hom_{K(A)}(P_{\bullet},F[n]) \cong \Hom_{D(A)}(P_\bullet,F[n]) \cong \Hom_{D(A)}(X,F[n]) \cong \Ext^{n}_A(X,F) = 0$, the map $f \circ \pi_n : P_n \to F$ is homotopic to zero. So there is $s : P_{n-1} \to F$ such that $f \circ \pi_n = s \circ d_n$. If $F\neq 0$, then it has a simple quotient $\pi : F\to S\to 0$. The map $0 \neq \pi \circ f \circ \pi_n = \pi \circ s \circ d_n$ is in the image of the differential $\delta : \Hom_A(P_{n-1},S) \to \Hom_A(P_n,S)$ and this contradicts the minimality of the projective resolution by \cref{lem:min}. Hence $F = 0$ and $\Omega_X^{n} \cong T\in \T$. 

Now we assume that $\forall X\in \T$, we have $\Omega_X^{n} \in\T$. By induction we have $\Omega_X^{kn} \in \T$ for every $k\geq 1$. Let $F\in \F$ and $P_{\bullet}$ a minimal projective resolution of $X$. Let $\alpha \in \Hom_{K(A)}(P_\bullet,F[kn])\cong \Ext_A^{kn}(X,F)$: 
\[
\xymatrix
{
P_{kn+1}\ar[r]^{d_{kn+1}}\ar[d] & \ar[r]\ar[d]^{\alpha} P_{kn} \ar[r]^{d_{kn}} & P_{kn-1}\ar[d] \ar[r] & \cdots \ar[r] & P_0\\
0\ar[r]&F \ar[r] &0 
}
\]

Then the composition $P_{kn+1} \to P_{kn} \to F$ is zero, so $\Ima(d_{kn+1}) \subseteq \ker(\alpha)$. Hence $\alpha$ factorizes through $P_{kn}/\Ima(d_{kn+1}) = P_{kn} / \ker(d_{kn}) \cong \Ima(d_{kn}) = \Omega_X^{kn}$. Since $F\in \F$ and $\Omega_X^{kn} \in \T$, we have $\alpha = 0$. By duality, we have $1\Rightarrow 2 \Rightarrow 4 \Rightarrow 1$.

\end{proof}
In the special case $n=1$ we have to following easier characterization. 
\begin{lemma}\label{lem:omega}
Let $A$ be an artinian algebra and $(\T,\F)$ be a torsion pair of $\modf A$. The following are equivalent:
\begin{enumerate}
\item $\Ext^{i}_A(\T,\F) = 0$ for all $i\geq 1$.
\item $\Ext^{1}_A(\T,\F) = 0$.
\item $\T$ and $\F$ are two \emp{Serre subcategories}. 
\item $\T$ is closed under first syzygies: if $X\in \T$, then $\Omega_X \in \T$.
\item $\F$ is closed under first cosyzygies: if $X\in \F$, then $\Omega_X^{-1} \in \F$. 
\end{enumerate}
\end{lemma}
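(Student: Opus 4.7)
The plan is first to observe that Proposition~\ref{pro:main} applied with $n=1$ already yields the equivalences $(1)\Leftrightarrow(2)\Leftrightarrow(4)\Leftrightarrow(5)$, so the only remaining task is to insert condition $(3)$ into this chain, which I would do by proving $(3)\Rightarrow(2)$ and $(2)\Rightarrow(3)$ separately. Throughout it is useful to remember that in any torsion pair $\T$ is automatically closed under quotients and $\F$ under subobjects, so the word ``Serre'' adds precisely the \emph{opposite} closure property to each side.

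For $(3)\Rightarrow(2)$ I would take an extension $0\to F \to E \to T \to 0$ with $T\in\T$ and $F\in\F$ and split it by dissecting $E$ with its canonical torsion filtration $t(E)\subseteq E$. The intersection $t(E)\cap F$ sits inside $F\in\F$ and inside $t(E)\in\T$; the latter containment is what requires $\T$ to be Serre (hence closed under subobjects), and together these force $t(E)\cap F\in\T\cap\F=0$. Dually, the quotient $E/(t(E)+F)$ is a quotient of $T=E/F\in\T$ and of $E/t(E)\in\F$, the second using that $\F$ is Serre (hence closed under quotients), so it lies in $\T\cap\F$ and vanishes. Therefore $E = t(E)\oplus F$ and the extension splits.

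For $(2)\Rightarrow(3)$ it is enough by duality to show that $\T$ is closed under subobjects. Given $M\subseteq T\in\T$, I would decompose $M$ via its own torsion sequence $0\to t(M)\to M\to f(M)\to 0$ and argue $f(M)=0$. The quotient $T/t(M)$ lies in $\T$ (quotient of $T$), and the short exact sequence $0\to f(M)\to T/t(M)\to T/M\to 0$ represents a class in $\Ext^{1}_A(T/M,f(M))$, which vanishes by $(2)$. The splitting exhibits $f(M)$ as a summand, hence a quotient, of $T/t(M)\in\T$, so $f(M)\in\T\cap\F=0$ as needed; the dual argument gives $\F$ closed under quotients. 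The whole proof is a short diagram chase and the main subtlety, rather than a real obstacle, is simply keeping track of which half of the Serre hypothesis is being invoked at each step, since in our setup the two classes already come equipped with the opposite closure properties for free.
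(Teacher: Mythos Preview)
Your argument is correct. Like the paper, you invoke Proposition~\ref{pro:main} with $n=1$ to get $(1)\Leftrightarrow(2)\Leftrightarrow(4)\Leftrightarrow(5)$ and then splice condition $(3)$ into the chain, but you do the splicing differently.

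For $(2)\Rightarrow(3)$ the paper gives a one-line argument via $\Hom$-orthogonality: if $0\to K\to T\to C\to 0$ is exact with $T\in\T$, then $C\in\T$ (quotient of $T$), the long exact sequence gives $\Hom_A(K,F)\hookrightarrow\Ext^1_A(C,F)=0$ for every $F\in\F$, and hence $K\in{}^{\bot}\F=\T$. Your route via the torsion decomposition of $M$ and splitting off $f(M)$ works just as well but is a bit longer. Conversely, to get from $(3)$ back into the chain the paper proves $(3)\Rightarrow(4)$ by noting that a Serre torsion class is closed under projective covers (citing \cite{aet} and \cite{dickson1966torsion}) and hence under syzygies; your direct splitting argument for $(3)\Rightarrow(2)$ is self-contained and avoids appealing to that auxiliary fact. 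So the trade-off is: the paper's $(2)\Rightarrow(3)$ is slicker, while your $(3)\Rightarrow(2)$ is more elementary than the paper's $(3)\Rightarrow(4)$.
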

\begin{proof}
The only non trivial implications are $2\Rightarrow 3$ and $3\Rightarrow 4$. For $2\Rightarrow 3$, if $0 \to K \to \T \to C\to 0$ is a short exact sequence we get $K \in \,^{\bot}\Fc = \Tc$ so $\Tc$ is a Serre subcategory. The result for $\Fc$ is dual. For $3\Rightarrow 4$, we can copy the proof of \cite[Proposition 3.21]{aet} or  \cite[Theorem 2.9]{dickson1966torsion}. First we see that $\Tc$ is closed under projective covers and since it is closed under submodules, it is closed under first syzygies. 
\end{proof}
We view the set of $\omega_n$-torsion pairs as a subposet of the lattice of torsion classes. That is:
\[
(\T_1,\F_1) \leq (\T_2,\F_2) \Leftrightarrow T_1\subseteq T_2 \Leftrightarrow \F_2\subseteq \F_1. 
\]

\begin{corollary}
Let $A$ be an artinian algebra. 
\begin{enumerate}
\item The poset of $\omega_n$-torsion pairs is a complete sublattice of the poset of torsion classes of $\modf A$. 
\item If $A$ is an algebra of finite global dimension $n$, viewed as an abelian category with $\Ext^{-1}$-structure given by $\Ext^{-1}(X,Y) = \Ext^{n}_{A}(X,Y)$. Then the poset of $s$-torsion pairs is a complete sublattice of the lattice of torsion pairs.
\item The lattice of $\omega_n$-torsion pairs is a \emp{completely semidistributive} lattice. 
\item If $A$ is \emp{$\tau$-tilting finite}, the lattice of $\omega_n$-torsion pairs is a \emp{congruence uniform} lattice. 
\end{enumerate}
\end{corollary}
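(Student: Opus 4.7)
My plan is to prove the four claims in order, using \cref{pro:main} repeatedly and then invoking known structural results about the lattice of torsion classes for the last two parts.

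For (1), I would start from the explicit description of the lattice operations given earlier in the section:
\[
(\T,\F)\wedge(\T',\F')=(\T\cap\T',\Fb(\F\cup\F'))\quad\text{and}\quad(\T,\F)\vee(\T',\F')=(\Tb(\T\cup\T'),\F\cap\F').
\]
By \cref{pro:main}, being an $\omega_n$-torsion pair is equivalent to either of the symmetric conditions ``$\T$ is closed under $n$-syzygies'' or ``$\F$ is closed under $n$-cosyzygies''. Both closure conditions are obviously preserved by intersections of subcategories, so for the meet I would use $n$-syzygy closure on the $\T$-side applied to $\T\cap\T'$, and dually for the join I would use $n$-cosyzygy closure on the $\F$-side applied to $\F\cap\F'$. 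The very same argument works verbatim for arbitrary families, yielding the complete sublattice assertion.

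Part (2) is immediate from (1) together with the remark preceding the corollary: when $A$ has finite global dimension $n$, the $s$-torsion pairs of \cite{aet} (defined via the $\Ext^{-1}$-structure with $\Ext^{-1}(X,Y)=\Ext^{n}_A(X,Y)$) coincide by definition with the $\omega_n$-torsion pairs. For (3), the approach is to invoke the theorem that the lattice of torsion classes of an artinian algebra is completely semidistributive; since the meet-semidistributive and join-semidistributive laws are universal first-order statements about joins and meets, they are automatically inherited by any complete sublattice, and (1) supplies the complete sublattice.

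For (4), under $\tau$-tilting finiteness the ambient lattice of torsion classes is finite and congruence uniform by \cite{DIRRT}. The subtlety is that congruence uniformity is not automatically inherited by arbitrary complete sublattices, so (3) alone is not enough. My plan is to combine the semidistributivity already obtained in (3) with a direct verification that the sublattice of $\omega_n$-torsion pairs is \emph{polygonal} in the sense of Reading; once both properties hold for a finite lattice, Reading's theorem (finite semidistributive $+$ polygonal $\Rightarrow$ congruence uniform) closes the argument. The polygonality would have to be checked cover-by-cover inside the sublattice, lifting each sublattice cover to a chain of covers in the ambient lattice and using the explicit description of polygonal intervals for torsion classes from \cite{DIRRT} to argue that the induced interval in the sublattice is again a polygon. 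I expect this transfer of polygonality from the ambient torsion class lattice to the sublattice of $\omega_n$-torsion pairs to be the main obstacle of the proof.
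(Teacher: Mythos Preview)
Your arguments for (1)--(3) match the paper's essentially word for word: use \cref{pro:main} to reformulate the $\omega_n$-condition as closure of $\T$ under $n$-syzygies (resp.\ of $\F$ under $n$-cosyzygies), observe that such closure passes to arbitrary intersections, and then inherit complete semidistributivity from the ambient lattice of torsion classes via \cite{DIRRT}.

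For (4), however, you are working far too hard, and the reason is a false premise. You write that ``congruence uniformity is not automatically inherited by arbitrary complete sublattices'', but in the finite setting this \emph{is} automatic: for finite lattices, congruence uniform coincides with bounded in the sense of McKenzie, and Day proved (\cite[Theorem 4.3]{day}) that any sublattice of a bounded lattice is bounded. Under $\tau$-tilting finiteness the lattice of torsion classes is finite and congruence uniform by \cite{DIRRT}, so by (1) and Day's theorem the sublattice of $\omega_n$-torsion pairs is congruence uniform as well. This is exactly the paper's one-line proof. Your proposed detour through polygonality is not wrong in spirit, but it is unnecessary, and the ``main obstacle'' you anticipate---transferring polygonality cover by cover from the ambient lattice to the sublattice---is genuinely delicate and left completely open in your sketch, whereas the citation to Day dissolves the issue entirely.
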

\begin{proof}
Let $(\T_i,\F_i)_i$ be a family of $\omega_n$-torsion pairs. Their meet in the lattice of all torsion classes is given by $(\bigcap_i \T_i, \star)$. Since $\T_i$ is closed under $n$th syzygies for every $i$, this is also the case of $\bigcap_i \T_i$. Hence the meet of $\omega_n$-torsion pairs is an $\omega_n$-torsion pair. The join is given by $(\star,\bigcap_i\F_i)$ and $\bigcap_i\F_i$ is closed under $n$th cosyzygies. Hence the $\omega_n$-torsion pairs form a complete sublattice. A (complete) sublattice of a (completely) semidistributive lattice is (completely) semidistributive, and by \cite[Theorem 4.3]{day} a sublattice of a congruence uniform lattice is congruence uniform, so the last statement follows from \cite[Theorem 1.3, Corollary 3.12]{DIRRT}. 
\end{proof}
More generally we can consider \emp{operations} $t$ and $f$ that take objects or morphisms of $\modf A$ and produce new objects. For example $t$ could send a morphism to a kernel, or an object to its first  syzygy etc.  We say that $(\Tc,\Fc)$ is a $(t,f)$-torsion pair if  $\Tc$ is closed under $t$ and $\Fc$ is closed under $f$. When $\Tc$ is closed under $t$ if and only if $\Fc$ is closed under $f$, the set of $(t,f)$-torsion pairs is a \emp{complete sublattice} of the lattice of torsion pairs. Here are other known examples. 
\begin{example}
 A torsion pair $(\T,\F)$ is called:
\begin{enumerate}
\item  \defn{split} if $\Ext^{1}_A(\F,\T) = 0$. By \cite[Proposition 1.7]{assem2006elements} A torsion pair is split if and only if $\T$ is closed under $\tau^{-1}$. This is equivalent to $\F$ being closed under $\tau$. 
\item \defn{hereditary} if $\T$ is closed under submodules. By \cite[Theorem 2.9]{dickson1966torsion}, this is equivalent to $\F$ being closed under injective envelopes.
\item \defn{cohereditary} if $\F$ is closed under quotients. This is equivalent to $\T$ being closed under projective covers. 
\end{enumerate}
\end{example}
\begin{remark}
It is tempting to call the sublattices obtained by imposing algebraic conditions on the torsion pairs \defn{the algebraic sublattices} of the lattice of torsion pairs. In general, there are many more sublattices. For example by the Birkhoff representation theorem any finite distributive lattice with $n$ join-irreducible elements is isomorphic to a sublattice of the lattice of torsion pairs of a semisimple algebra with $n$ simple modules. Due to the semisimplicity of the algebra, there is not much choice for an algebraic condition to impose on the torsion pairs.  The next most simple case, is the case of an equioriented quiver of type $A$. The lattice of torsion pairs is isomorphic to the Tamari lattice and it is an open question to understand its sublattices (see \cite{santocanale2013sublattices} for more details). 
\end{remark}
\begin{corollary}
A torsion pair is an $\omega$-torsion pair if and only if it is both hereditary and cohereditary. 
\end{corollary}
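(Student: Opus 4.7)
The plan is to reduce the statement directly to condition (3) of \cref{lem:omega}, which characterizes $\omega$-torsion pairs as precisely those torsion pairs $(\T,\F)$ for which both $\T$ and $\F$ are Serre subcategories.

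First I would unpack the definitions. Any torsion class $\T$ is automatically closed under extensions and quotients, and any torsion-free class $\F$ is automatically closed under extensions and subobjects. Therefore $\T$ is a Serre subcategory exactly when it is additionally closed under subobjects, i.e., exactly when $(\T,\F)$ is hereditary; dually, $\F$ is a Serre subcategory exactly when it is closed under quotients, i.e., exactly when $(\T,\F)$ is cohereditary.

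Putting these two observations together, the conjunction ``hereditary and cohereditary'' is literally the same as ``both $\T$ and $\F$ are Serre subcategories''. By the equivalence of (2) and (3) in \cref{lem:omega}, this in turn is equivalent to $\Ext^1_A(\T,\F) = 0$, which is the definition of an $\omega_1$-torsion pair. Both implications are therefore immediate.

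There is no real obstacle here; the content is entirely packaged in \cref{lem:omega}, and the role of this corollary is just to translate the cohomological condition $\Ext^1_A(\T,\F)=0$ into the two closure conditions already named in the preceding example. If one wanted a self-contained argument without appealing to \cref{lem:omega}, the only slightly nontrivial step would be showing that if $\T$ is closed under submodules and $\F$ is closed under quotients then $\Ext^1_A(T,F) = 0$ for $T \in \T$ and $F \in \F$: given a short exact sequence $0 \to F \to E \to T \to 0$, the canonical sequence for $E$ with respect to $(\T,\F)$ together with closure under subobjects/quotients forces the sequence to split, but this is exactly the argument that underlies the proof of \cref{lem:omega}.
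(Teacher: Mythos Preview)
Your proposal is correct and matches the paper's intent exactly: the paper states this corollary without proof, since it is immediate from \cref{lem:omega}(3) together with the definitions of hereditary and cohereditary given just above. Your unpacking of why ``$\T$ and $\F$ are both Serre'' is literally the same as ``hereditary and cohereditary'' is precisely the missing sentence, and nothing more is needed.
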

\begin{corollary}
Let $A$ be an artin algebra. The poset of $\omega$-torsion pairs is a \emp{finite distributive} lattice.
\end{corollary}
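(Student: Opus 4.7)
My plan is to embed the lattice of $\omega$-torsion pairs into a finite Boolean lattice, which will give finiteness and distributivity simultaneously.

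First I would invoke \cref{lem:omega}: an $\omega$-torsion pair is exactly a torsion pair $(\T,\F)$ such that $\T$ (equivalently $\F$) is a Serre subcategory of $\modf A$. For an artin algebra $A$, Serre subcategories of $\modf A$ are in bijection with subsets of the set $\mathrm{Simp}(A)$ of isomorphism classes of simple modules, via $\Sigma \mapsto \langle \Sigma \rangle$, where $\langle \Sigma \rangle$ is the full subcategory of modules all of whose composition factors lie in $\Sigma$. Since an artin algebra has only finitely many simple modules up to isomorphism, this already bounds the number of $\omega$-torsion pairs by $2^{|\mathrm{Simp}(A)|}$, settling finiteness.

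For distributivity, I would consider the map $\Phi \colon (\T,\F) \mapsto \Sigma_\T := \{S \in \mathrm{Simp}(A) : S \in \T\}$ and show it is an injective lattice morphism from the lattice of $\omega$-torsion pairs into the Boolean lattice $2^{\mathrm{Simp}(A)}$. Injectivity is immediate, since the torsion pair is recovered from $\T$ (as $\F = \T^{\bot}$) and a Serre subcategory is determined by the simples it contains. For the meet: by the preceding corollary the $\omega$-lattice is a sublattice of the lattice of torsion pairs, so the meet has torsion class $\T_1 \cap \T_2$, whose simples are exactly $\Sigma_1 \cap \Sigma_2$. For the join: the corresponding torsion class is $\Tb(\T_1 \cup \T_2)$, which is Serre again by the sublattice property. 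A short sandwich argument then compares it with the Serre subcategory $\langle \Sigma_1 \cup \Sigma_2 \rangle$: the latter is a torsion class containing both $\T_1$ and $\T_2$, hence contains $\Tb(\T_1 \cup \T_2)$; conversely $\Tb(\T_1 \cup \T_2)$ contains all simples in $\Sigma_1 \cup \Sigma_2$, so being Serre it contains $\langle \Sigma_1 \cup \Sigma_2 \rangle$. Thus the two Serre subcategories coincide and $\Phi$ sends the join to $\Sigma_1 \cup \Sigma_2$.

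The main subtlety is the identification $\Tb(\T_1 \cup \T_2) = \langle \Sigma_1 \cup \Sigma_2 \rangle$: it is not obvious a priori that the smallest torsion class containing $\T_1 \cup \T_2$ agrees with the Serre subcategory generated by the union of simples, and the key ingredient is precisely that $\Tb(\T_1 \cup \T_2)$ is itself Serre, which comes from the sublattice corollary. Once this is in hand, $\Phi$ is a lattice embedding into the finite distributive lattice $2^{\mathrm{Simp}(A)}$, and the lattice of $\omega$-torsion pairs is finite and distributive as a sublattice of a finite distributive lattice.
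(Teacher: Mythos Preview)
Your proof is correct and follows essentially the same approach as the paper: both identify Serre subcategories with subsets of simples and embed the $\omega$-torsion pairs as a sublattice of the resulting Boolean lattice. The only cosmetic difference is that the paper factors the embedding through the lattice of hereditary torsion pairs (which it observes is the full Boolean lattice $(\mathcal{P}(\mathcal{S}),\subseteq)$), whereas you verify the join-preservation of $\Phi$ directly via your sandwich argument.
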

\begin{proof}
The map $U \mapsto \Filt(U)$ is a bijection between the subsets of the isomorphism classes $\mathcal{S}$ of simple $A$-modules and the Serre subcategories of $\modf A$. It follows that the lattice of hereditary torsion pairs is isomorphic to the boolean lattice $(\mathcal{P}(\mathcal{S}),\subseteq)$. Hence the lattice of $\omega$-torsion pairs is finite and as a sublattice of a boolean lattice, it is distributive.
\end{proof}

\begin{remark}
The lattice of $\omega$-torsion pairs is a sublattice of the lattice of $\omega_n$-torsion pairs. From \cref{pro:main}, we see that the $\omega_n$-torsion pairs are $\omega_m$-torsion pairs when \emp{$n$ divides $m$}.  
\end{remark}

\begin{example}
We consider the quiver $\begin{tikzcd}
	1 & 2
	\arrow["a", shift left=2, from=1-1, to=1-2]
	\arrow["b", shift left=2, from=1-2, to=1-1] \end{tikzcd}$
and the algebra $A = kQ/{<}ab{>}$. It is an algebra of global dimension $2$ with $5$ indecomposable modules: the two simple modules $1$ and $2$ and their projective covers $P_1$ and $P_2$. The projective $P_2$ is also the injective envelope of $2$ and the remaining module is $I_1$ the injective envelope of $1$. There are only $6$ torsion pairs, and the poset of torsion classes is as follows:
\[
\xymatrix{
 & all &  \\
1,P_1 \ar@{->}[ru] &  & 2,P_2,I_1 \ar@{->}[lu] \\
1 \ar@{->}[u] &  & 2 \ar@{->}[u] \\
 & 0 \ar@{->}[lu] \ar@{->}[ru] & 
}
\]
The hereditary torsion classes are $0, \Filt(1), \Filt(2)$ and $\modf A$. The cohereditary torsion classes are $0, \add(1\oplus P_1), \add(2\oplus I_1\oplus P_2)$ and $\modf A$. The only $\omega$-torsion pairs are $0$ and $\modf A$. The remaining $\omega_2$-torsion pairs are $\Filt(2)$ and $\add(1 \oplus P_1)$. 
\end{example}

\begin{example}
Let $n\in \mathbb{N}$ and $[n]$ a total order with $n$ elements. We denote by $\Int([n],\subseteq)$ the poset of intervals of $[n]$ ordered by containment. The first posets can be found in \cref{fig:dyck}.
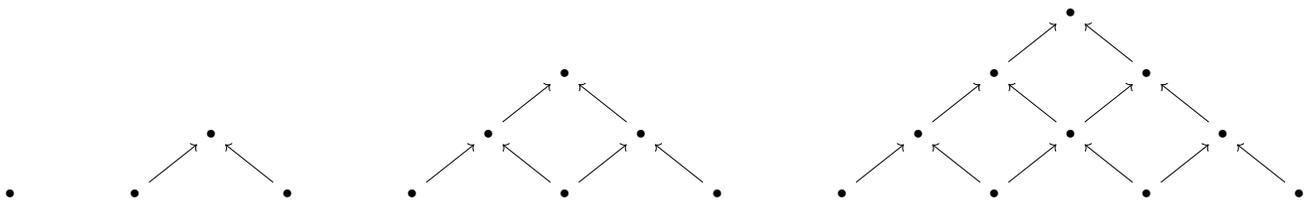
\begin{figure}[h!]
\[
\adjustbox{scale=0.7,center}{
\begin{tikzcd}
	&&&&&&&&&&&&&&& \bullet \\
	&&&&&&&& \bullet &&&&&& \bullet && \bullet \\
	&&& \bullet &&&& \bullet && \bullet &&&& \bullet && \bullet && \bullet \\
	\bullet && \bullet && \bullet && \bullet && \bullet && \bullet && \bullet && \bullet && \bullet && \bullet
	\arrow[from=2-15, to=1-16]
	\arrow[from=2-17, to=1-16]
	\arrow[from=3-8, to=2-9]
	\arrow[from=3-10, to=2-9]
	\arrow[from=3-14, to=2-15]
	\arrow[from=3-16, to=2-15]
	\arrow[from=3-16, to=2-17]
	\arrow[from=3-18, to=2-17]
	\arrow[from=4-3, to=3-4]
	\arrow[from=4-5, to=3-4]
	\arrow[from=4-7, to=3-8]
	\arrow[from=4-9, to=3-8]
	\arrow[from=4-9, to=3-10]
	\arrow[from=4-11, to=3-10]
	\arrow[from=4-13, to=3-14]
	\arrow[from=4-15, to=3-14]
	\arrow[from=4-15, to=3-16]
	\arrow[from=4-17, to=3-16]
	\arrow[from=4-17, to=3-18]
	\arrow[from=4-19, to=3-18]
\end{tikzcd}
}
\]
\caption{First posets of intervals of total orders.}\label{fig:dyck}
\end{figure}
In the case $n=2$, the incidence algebra is hereditary and it has $14$ torsion pairs and only $5$ $\omega$-torsion pairs. For $n=3$ the incidence algebra has global dimension $2$, it has $35$ indecomposable modules and using the computer we found $808$ torsion pairs. The number of $\omega$-torsion pairs is $14$ (see below) and the number of $\omega_2$-torsion pairs is $239$. For $n\geq 4$, there is a representation embedding from the category of modules over a path algebra of extended Dynkin type $\widetilde{D}_4$, to the category of modules over the incidence algebra. Hence the incidence algebra is $\tau$-tilting infinite, but the number of $\omega$-torsion pairs is $42$ (see below). 
\end{example}

As distributive lattice, the poset of $\omega$-torsion pairs is isomorphic to a lattice of order ideals. Here, the description is particularly easy. We denote by $\mathcal{S}$ the set of isomorphism classes of simple modules. When $U \subseteq \mathcal{S}$, we say that $U$ is \defn{closed under successors} if for every $S \in U$ and $S'\in \mathcal{S}$ such that $\Ext^{1}_{A}(S,S') \neq 0$, then $S' \in U$.

\begin{proposition}[Adachi, Enomoto and Tsukamoto]\label{prop:successors}
Let $A = kQ/I$ be an admissible quotient of the path algebra of a finite quiver $Q$. The poset of $\omega$-torsion pairs of $\modf A$ is isomorphic to the poset of subsets of $\mathcal{S}$ which are closed under successors, ordered by inclusion.
\end{proposition}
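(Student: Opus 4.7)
The plan is to combine the preceding corollary, which characterises $\omega$-torsion pairs as torsion pairs in which both $\T$ and $\F$ are Serre subcategories, with the standard bijection $U\mapsto\Filt(U)$ between subsets of $\mathcal{S}$ and Serre subcategories of $\modf A$. Under this bijection any $\omega$-torsion pair can be written uniquely as $(\Filt(U),\Filt(V))$ with $U,V\subseteq\mathcal{S}$. First I would observe that $U$ and $V$ must be complementary: the canonical torsion sequence applied to a simple module $S$ forces $S\in U\cup V$, giving $U\cup V=\mathcal{S}$, while $\Hom_A(S,S)\neq 0$ rules out $U\cap V\neq\emptyset$. Hence the problem reduces to characterising those subsets $U\subseteq\mathcal{S}$ for which $(\Filt(U),\Filt(\mathcal{S}\setminus U))$ is an $\omega$-torsion pair.

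Next I would show that the $\omega$-condition $\Ext^{1}_{A}(\Filt(U),\Filt(V))=0$ is equivalent to $\Ext^{1}_{A}(S,S')=0$ for all $S\in U$ and $S'\in V$. A double induction on the composition lengths of objects of $\Filt(U)$ and $\Filt(V)$, combined with the long exact sequence of $\Ext$, propagates the vanishing from simples to all of $\Filt(U)\times\Filt(V)$. Taking the contrapositive, this vanishing is exactly the statement that $U$ is closed under successors.

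The remaining, and I expect only non-routine, step is to check that for $U$ closed under successors, $(\Filt(U),\Filt(V))$ really is a torsion pair, and not merely a pair of Serre subcategories with vanishing $\Ext^{1}$. The hom-vanishing $\Hom_{A}(\Filt(U),\Filt(V))=0$ is automatic since any such morphism has image with composition factors in $U\cap V=\emptyset$. For the canonical short exact sequence, given $X\in\modf A$ let $t(X)$ be the largest submodule of $X$ lying in $\Filt(U)$; this is well-defined because $\Filt(U)$ is Serre. Arguing by contradiction, if $X/t(X)\notin\Filt(V)$ then, choosing in a composition series of $X/t(X)$ the first factor lying in $U$, one obtains a subobject sitting in a short exact sequence $0\to F\to Y\to S\to 0$ with $F\in\Filt(V)$ and $S\in U$ simple. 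By the previous step $\Ext^{1}_{A}(S,F)=0$, so the sequence splits and $S$ embeds in $X/t(X)$; lifting this copy of $S$ back to $X$ produces a submodule in $\Filt(U)$ strictly enlarging $t(X)$, a contradiction. This completes the bijection.
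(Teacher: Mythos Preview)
Your argument is correct and complete. The paper takes a somewhat different route, leaning on projective covers rather than the $\Ext^{1}$-vanishing directly. For the forward direction, instead of your reduction to $\Ext^{1}_{A}(S,S')=0$ for simples, the paper uses that an $\omega$-torsion class is closed under projective covers: if $\Ext^{1}_{A}(S,S')\neq 0$ then $S'$ is a composition factor of $P_S\in\T$, hence $S'\in U$. For the backward direction, rather than constructing the torsion exact sequence by hand via your splitting argument, the paper notes that $\Filt(U)$, being Serre, is already a torsion class, and then identifies $\Filt(U)^{\bot}=\Filt(V)$ by showing that $U$ closed under successors forces $P_S\in\Filt(U)$ for each $S\in U$; thus $M\in\Filt(U)^{\bot}$ means $\Hom_A(P_S,M)=0$ for all $S\in U$, so every composition factor of $M$ lies in $V$. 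Your approach stays closer to the raw $\Ext^{1}$-condition and is entirely self-contained, at the cost of the inductive reduction and the explicit maximal-submodule argument; the paper's version is a touch quicker but presupposes the cohereditary characterisation (closure under projective covers) established earlier.
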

\begin{proof}
We can copy the proof of \cite[Proposition 3.24]{aet} word for word. If $(\T,\F)$ is an $\omega$-torsion pair, then consider the set $\mathcal{S}(\T)$ of simple modules in $\T$ and we have $\T = \Filt(\mathcal{S}(\T))$ since $\T$ is a Serre subcategory. If $S\in \mathcal{S}(\T)$ and $\Ext^{1}_{A}(S,T) \neq 0$,  then $T$ is a composition factor of the projective cover of $S$. Since $\T$ is closed under projective covers, we get that $T\in \T$. So $\mathcal{S}(\T)$ is closed under successors. 

Conversely if $U\subseteq \mathcal{S}$ is closed under successors, we can consider $\T = \Filt(\{S \ |\ S\in U\})$ and $\F = \Filt(\{S \ |\ S\notin U\})$. By construction $\T$ is a Serre subcategory so it is in particular a torsion class. Hence, it is enough to see that $\T^{\bot} = \F$. Since $U$ is closed under successors, the projective covers of the simple modules of $U$ are in $\T$.  Moreover $\Hom_A(\T,\F) = 0$ since the two categories don't share any simple module. If $M \in \T^{\bot}$, then for every simple $S \in \T$ we have $\Hom(P_S,M) = 0$, where $P_S$ denotes a projective cover of $S$. Hence the composition factors of $M$ are all in $\F$, so $M\in \F$. This proves that $(\T,\F)$ is a torsion pair. Moreover since $\T$ and $\F$ are two Serre subcategories, this is an $\omega$-torsion pair by \cref{lem:omega}.

These two maps are two inverse bijections, and they respect the order relations, so they induce two inverse isomorphisms of posets. 
\end{proof}
As corollaries we have:
\begin{corollary}
Let $A = kQ/I$ be an admissible quotient of $kQ$. Then the lattice of $\omega$-torsion pairs of $A$ is equal to the lattice of $\omega$-torsion pairs of the hereditary algebra $kQ$. 
\end{corollary}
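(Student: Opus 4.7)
The plan is to apply \cref{prop:successors} twice, once to $A = kQ/I$ and once to the hereditary algebra $kQ$. In each case the lattice of $\omega$-torsion pairs is identified with the poset of subsets of $\mathcal{S}$ closed under successors, where the successor relation is defined by nonvanishing of $\Ext^{1}$ between simple modules. Since $I$ is admissible (so $I \subseteq J^{2}$, with $J$ the arrow ideal), the simple modules of $A$ and of $kQ$ are canonically in bijection with the vertices of $Q$. I can thus identify the set $\mathcal{S}$ for the two algebras, and the task reduces to showing that the two successor relations on this common vertex set agree.

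This is the heart of the argument. I would use the standard identification
\[
\Ext^{1}_A(S_i, S_j) \cong \Hom_A\bigl(\operatorname{rad}(P_i^A)/\operatorname{rad}^{2}(P_i^A),\, S_j\bigr),
\]
together with the fact that admissibility $I \subseteq J^{2}$ forces $\operatorname{rad}(P_i^A)/\operatorname{rad}^{2}(P_i^A) \cong e_i J / e_i J^{2}$, which is independent of the relations $I$ and has as basis the arrows of $Q$ starting at $i$. Consequently $\dim_k \Ext^{1}_A(S_i,S_j)$ equals the number of arrows from $i$ to $j$ in $Q$, which is in particular the same for $A$ and for $kQ$. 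Hence $S_j$ is a successor of $S_i$ in $\Mod A$ if and only if it is a successor in $\Mod kQ$.

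Once this is established, the two posets of successor-closed subsets of $\mathcal{S}$ coincide literally, and \cref{prop:successors} yields the desired identification of the corresponding lattices of $\omega$-torsion pairs. I do not anticipate any serious obstacle: the result is a formal consequence of \cref{prop:successors} combined with the classical computation of $\Ext^{1}$ between simples over an admissible quotient of a path algebra. The only delicate point is to keep the bijection between vertices of $Q$ and simple modules compatible on both sides, but this is automatic from the construction $S_i = e_i A / e_i \operatorname{rad}(A)$.
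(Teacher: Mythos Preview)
Your proposal is correct and is exactly the argument the paper has in mind: the corollary is stated without proof, as an immediate consequence of \cref{prop:successors} together with the standard fact that for an admissible quotient $kQ/I$ one has $\dim_k\Ext^{1}_A(S_i,S_j)$ equal to the number of arrows $i\to j$ in $Q$. You have simply made explicit the step the paper leaves to the reader, namely that the successor relation on $\mathcal{S}$ is the same for $A$ and for $kQ$.
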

\begin{corollary}\label{cor:w}
Let $(P,\leq)$ be a finite poset. The lattice of $\omega$-torsion pairs of the incidence algebra of $(P,\leq)^{op}$ is isomorphic to the distributive lattice $(\ideal(P),\subseteq)$. 
\end{corollary}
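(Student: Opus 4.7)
The plan is to apply \cref{prop:successors} and then to translate the combinatorial condition ``closed under successors'' into the condition ``downward closed'' via an analysis of the Gabriel quiver of the incidence algebra.

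First, I would set up the algebra correctly. The incidence algebra $A$ of $(P,\leq)^{op}$ is a quotient of the path algebra of its Gabriel quiver $Q$ by the two-sided ideal generated by the commutation relations ``all paths between two fixed vertices agree''. These relations are supported in paths of length at least two, so the quotient is admissible and \cref{prop:successors} applies. The vertices of $Q$ are in bijection with the elements of $P$, and the simple $A$-modules $S_p$ are indexed by $p \in P$.

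Second, I would compute the arrows of $Q$, i.e., the cover relations of $(P,\leq)^{op}$. By definition, $p \lessdot^{op} q$ in $(P,\leq)^{op}$ if and only if $q \lessdot p$ in $(P,\leq)$; therefore $Q$ has an arrow $p\to q$ exactly when $p$ covers $q$ in $(P,\leq)$. Combining this with the standard fact (recorded in the proof of \cref{prop:successors}) that $\Ext^{1}_A(S_p,S_q) \neq 0$ if and only if there is an arrow $p \to q$ in $Q$, I obtain: $\Ext^{1}_A(S_p,S_q)\neq 0 \iff p \text{ covers } q \text{ in } (P,\leq)$.

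Third, I would translate the successor condition. A subset $U \subseteq P$ is closed under successors precisely when, whenever $p \in U$ and $p$ covers $q$ in $(P,\leq)$, one has $q \in U$. Iterating along a saturated chain, this is equivalent to: if $p \in U$ and $q \leq p$ in $P$, then $q \in U$, i.e., $U$ is an order ideal of $(P,\leq)$. Conversely, every order ideal clearly satisfies the successor condition. Composing the bijection of \cref{prop:successors} with this combinatorial translation gives a bijection between $\omega$-torsion pairs of $A$ and elements of $\ideal(P)$. Both sides are ordered by inclusion and the bijection preserves inclusion, so it is an isomorphism of (distributive) lattices.

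The main obstacle is purely bookkeeping: one must keep straight which direction the arrows of the Gabriel quiver point relative to the order on $P$, since the statement involves $(P,\leq)^{op}$ rather than $(P,\leq)$. Once that is pinned down, the rest is an immediate consequence of \cref{prop:successors}.
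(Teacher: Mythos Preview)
Your proposal is correct and follows exactly the intended approach: the paper states this corollary without proof, treating it as immediate from \cref{prop:successors}, and your argument simply spells out the translation between ``closed under successors'' and ``order ideal of $(P,\leq)$'' via the Hasse quiver of $(P,\leq)^{op}$. The only thing to be careful about is the orientation convention you flag yourself, and your handling of it is fine.
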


We conclude this note by giving two short proofs of \cref{thm:2}. 

\begin{proof}[First proof of \cref{thm:2}]

By \cite{thomas_tamari}, the Tamari lattice $\Tam_{n+1}$ is isomorphic to the lattice of torsion pairs of an equioriented quiver of type $A_n$. By \cite[Corollary 3.12]{DIRRT} this lattice is \emp{congruence uniform}. Hence, its lattice of congruences is isomorphic to the lattice of ideals of $(P,\leq_f)$ where $P$ is the set of join-irreducible elements and $\leq_f$ is the forcing ordering. By \cite[Theorem 1.4]{DIRRT} the join-irreducible of the lattice of torsion pairs are the bricks in the module category of $\kk A_n$, and they are in bijection with the intervals $[i,j]$ such that $1\leq i \leq j \leq n$. By \cite[Theorem 4.23, Corollary 4.27]{DIRRT}, we have $S_1 \leq_f S_2$ if and only if $S_2$ is a subfactor of $S_1$. In terms of intervals, we have $[i,j] \leq_f [k,l]$ if and only if $[k,l] \subseteq [i,j]$. Hence the lattice of congruences is isomorphic to $\ideal((\Int([n],\subseteq)^{op})$ which is isomorphic to the lattice of Dyck paths of length $n+1$.
\end{proof}
\begin{proof}[Second proof of \cref{thm:2}]
By \cite[Theorem 25]{roitzheim2022n}, the Tamari lattice $\Tam_n$ is isomorphic to the lattice of transfer systems on a total order with $n$ elements. Hence by \cite[Theorem 1.12]{luo2024lattice}, its congruence lattice is isomorphic to $\ideal((\operatorname{Rel}^*(L),\subseteq)^{op})$ where $\operatorname{Rel}^*(L)$ is the set of intervals $[a,b]\in \Int(L)$ with $a\neq b$. The poset  $(\operatorname{Rel}^*(L),\subseteq)$ is clearly isomorphic to $(\Int([n-1],\subseteq)$ and the result follows. 
\end{proof}
  \bibliographystyle{plain}
\bibliography{biblio}
 \end{document}